\theoremstyle{theorem}
\newtheorem*{theorem}{Theorem}
\theoremstyle{lemma}
\theoremstyle{assumption}
\theoremstyle{problem}
\newtheorem*{problem}{Problem}
\theoremstyle{proposition}
\newtheorem*{proposition}{Proposition}
\newcommand{\fig}[1]{Fig.~\ref{#1}}
\newcommand{\eq}[1]{Equation~(\ref{#1})}
\newcommand{\fbm}[1]{\mathbf{#1}}
\newcommand{\tbm}[1]{\fbm{#1}^\mathsf{T}}
\newcommand{\tfbm}[1]{\bm{#1}^\mathsf{T}}
\newcommand{\dottbm}[1]{\dot{\fbm{#1}}^\mathsf{T}}
\newcommand{\dotbm}[1]{\dot{\fbm{#1}}}
\newcommand{\ddotbm}[1]{\ddot{\fbm{#1}}}
\newcommand{\dddotbm}[1]{\dddot{\fbm{#1}}}
\begin{document}

\title{Connectivity-Preserving Coordination Control of Multi-Agent Systems with Time-Varying Delays}

\author{Yuan~Yang,~\IEEEmembership{Student Member,~IEEE,} Yang Shi,~\IEEEmembership{Fellow,~IEEE} and Daniela Constantinescu,~\IEEEmembership{Member,~IEEE,}\thanks{The authors are with the Department of Mechanical Engineering, University of Victoria, Victoria, BC V8W 2Y2 Canada (e-mail: yangyuan@uvic.ca; danielac@uvic.ca; yshi@uvic.ca).}} 
       
\maketitle

\begin{abstract}
This paper presents a distributed position synchronization strategy that also preserves the initial communication links for single-integrator multi-agent systems with time-varying delays. The strategy employs a coordinating proportional control derived from a specific type of potential energy, augmented with damping injected through a dynamic filter. The injected damping maintains all agents within the communication distances of their neighbours, and asymptotically stabilizes the multi-agent system, in the presence of time delays. Regarding the closed-loop single-integrator multi-agent system as a double-integrator system suggests an extension of the proposed strategy to connectivity-preserving coordination of Euler-Lagrange networks with time-varying delays. Lyapunov stability analysis and simulation results validate the two designs.
\end{abstract}

\begin{IEEEkeywords}
connectivity preservation, distributed coordination, multi-agent systems, time-varying delays.
\end{IEEEkeywords}

\section{Introduction}\label{Sec: introduction}

Distributed coordination control of multi-agent systems~(MAS-s) has received considerable attention in the past decade~\cite{Ren2013TIE,Shi2017TIE}. Simple Proportional (P) and Proportional-Derivative~(PD) coordination strategies have been proven effective for networks of single-integrator~\cite{Murray2004TAC}, double-integrator~\cite{Ren2008TAC}, Euler-Lagrange~(EL)~\cite{Ren2009IJC}, and more general second-order nonlinear~\cite{Ren2013Automatica} systems. Further, adaptive control and output feedback control have overcome system uncertainties~\cite{Ren2013Automatica2,Wang2013TAC} and the lack of velocity measurements~\cite{Duan2014TAC,Jiang2017TAC}, respectively.\par

Time-delayed communications foil agents' immediate access to their neighbour information and, thus, distort the inter-agent couplings and endanger synchronization. Prior work has mitigated time delays in various ways. Rendering the MAS input-output passive has led to output feedback synchronization of affine nonlinear networks with constant time delays in~\cite{Chopra2012TAC}. Sufficient damping injection has provably coordinated EL networks with time-varying delays in~\cite{Nuno2013TRO}. An Immersion and Invariance~($\text{I\&I}$) observer has facilitated globally exponentially convergent velocity estimation and output feedback consensus of such networks in~\cite{Nuno2016TCNS}. The need for accurate system dynamics in the $\text{I\&I}$ observer design has been eliminated in~\cite{Nuno2017TCST}. Adaptive consensus of uncertain EL networks has been explored in~\cite{Nuno2011TAC,Wang2014TAC} for constant time delays, and in~\cite{Polushin2014TAC,Dixon2017Cybernetics,Jia2017Cybernetics} for time-varying delays with sufficiently small rates of change of the delays.

Conventional MAS coordination strategies assume a connected communication graph. In contrast, gradient-based controls derived from potentials dependent on inter-agent distances can both synchronize and maintain the connectivity of a MAS. Existing research has offered several potentials for connectivity-preserving MAS synchronization. Unbounded controls, derived from potentials that continuously increase inter-agent attraction when adjacent systems approach their communication boundaries, have prevented disconnection among single-integrator and nonholonomic agents in~\cite{Egerstedt2007TRO,Dimos2007TAC,Dimos2008TRO,Zavlanos2008TRO}. Bounded controls have achieved similar objectives in~\cite{Dimos2008ICRA,Amir2010TAC,Feng2013Automatica,Sun2013Automatica,Amir2013TAC}. A specific class of bounded potentials has led to PD-like, as well as adaptive PD-like, control with distance-dependent P gains for double-integrator networks in~\cite{Su2010SCL}, and for second-order nonlinear networks in~\cite{Su2011Automatica}, respectively. 

Connectivity-preserving coordination of MAS-s with time-varying delays has been less explored to date, and is the focus herein. The paper proposes a class of potential functions of inter-agent distances whose negative gradients it adopts as control inputs for single-integrator MAS-s. In the presence of communication delays, agents cannot access timely neighbour information and gradient-based controls suffer from disturbances that would lead to instability. A dynamic filter, that increases the degree of single-integrator MAS-s from one to two, suppresses the delay-induced disturbances through sufficient damping injection. The injected damping limits velocities appropriately to maintain each agent in its neighbours' communication zones although the neighbours move during the time it takes their information to reach the agent. Because the dynamic filter leads to a closed-loop system that can be regarded as a double-integrator MAS, the proposed methodology is readily extended to connectivity-preserving coordination of EL networks with time-varying delays. Rigorous Lyapunov stability analysis and simulations validate the proposed designs.     
\section{Preliminaries}\label{Sec: preliminaries}

Let a MAS have $N$ agents, and let each agent $i$ have a broadcasting area~(communication zone) of radius $r$ centered around its position $\fbm{x}_{i}$. At time $t$, agent $i$ can receive the delayed position $\fbm{x}_{jd}=\fbm{x}_{j}(t-d_{ji}(t))$ of agent $j$ only if agent $j$ is in the current communication zone of agent $i$, i.e., $|\fbm{x}^{d}_{ij}|=|\fbm{x}_{i}(t)-\fbm{x}_{j}(t-d_{ji}(t))|=\sqrt{{\fbm{x}^{d}_{ij}}^\mathsf{T}\cdot  \fbm{x}^{d}_{ij}}<r$, with $0\leq d_{ji}(t)\leq \overline{d}_{ji}$ the time-varying delay from $j$ to $i$.\par 

The communications of the MAS are described by its communication graph $\mathcal{G}$, that comprises two sets $\mathcal{G}=\{\mathcal{V},\mathcal{E}\}$: the set of graph nodes $\mathcal{V}=\{1,\cdots,N\}$, that includes all agents in the MAS; the set of graph edges $\mathcal{E}=\{(i,j)\in\mathcal{V}\times\mathcal{V}\}$, that collects all communication links $(i,j)$ in the system. An oriented communication link or graph edge $(i,j)$ exists if agent $j$ receives information from agent $i$. Agent $i$ is a neighbour of agent $j$, i.e., $i\in\mathcal{N}_{j}$, if and only if $(i,j)\in\mathcal{E}$, where $\mathcal{N}_{j}$ is the set of all neighbours of agent $j$. A graph $\mathcal{G}$ is undirected if agents $i$ and $j$ are adjacent, or neighbours of each other, i.e., $i\in\mathcal{N}_{j}$ if and only if $j\in\mathcal{N}_{i}$. A sequence of adjacent agents, $(v_{1},v_{2}), (v_{2},v_{3}), \cdots, (v_{m-1},v_{m})$ forms a path in an undirected graph. Then, an undirected graph is connected if and only if there exists a path between each pair of agents in the system.

The adjacency matrix $\fbm{A}_{N\times N}=[a_{ij}]$ associated to an undirected graph $\mathcal{G}$ is a symmetric matrix, with $a_{ij}>0$ if $(i,j)\in\mathcal{E}$ and $a_{ij}=0$ otherwise. The corresponding weighted Laplacian $\fbm{L}_{N\times N}=[l_{ij}]$ is also symmetric, with $l_{ii}=\sum_{k\in\mathcal{N}_{i}}a_{ik}$ and $l_{ij}=-a_{ij}$ for $j\neq i$. Assume that the undirected communication graph $\mathcal{G}$ contains $2M$ oriented edges. For each pair of adjacent agents $i$ and $j$, only one of the oriented links $(i,j)$ and $(j,i)$ is labelled as $e_{k}$, $k=1,\cdots,M$, with weight $w(e_{k})=a_{ij}=a_{ji}$. Particularly, if $e_{k}=(i,j)$, then agents $i$ and $j$ are the tail and the head of $e_{k}$. The incidence matrix $\fbm{D}_{N\times M}=[\delta_{hk}]$ of the graph is defined by $\delta_{hk}=1$ if agent $h$ is the head of $e_{k}$, $\delta_{hk}=-1$ if agent $h$ is the tail of $e_{k}$, and $\delta_{hk}=0$ otherwise. The graph Laplacian and the incidence matrix obey~\cite{Egerstedt2010Princeton}: $\fbm{L}=\fbm{D}\fbm{W}\tbm{D}$, where $\fbm{W}$ is a $M\times M$ diagonal matrix with $w(e_{k})$ on its diagonal.\par

The following assumption underlies the connectivity-preserving synchronization strategy designed in this paper:
\begin{enumerate}[label=A.\arabic*]
\item \label{A1}
The undirected communication graph of the MAS is initially connected and each pair of adjacent agents $(i,j)$ is strictly within their communication distance, i.e., $|\fbm{x}_{ij}(0)|=|\fbm{x}_{i}(0)-\fbm{x}_{j}(0)|\leq r-\epsilon$ for some $\epsilon>0$.
\end{enumerate}
Under assumption~\ref{A1}, the connectivity of an MAS with static communication graph is preserved if all initial communication links are maintained. In an MAS with dynamic graph, initially non-adjacent pairs of agents can start to receive each other's delayed information once they enter in each other's communication zone. A dynamic graph requires a mechanism to create new links depending on inter-agent distances and to simultaneously maintain connectivity. When communication delays prevent agents' access to timely information of other agents, building bidirectional links dynamically is not trivial. For simplicity, this paper considers only MAS-s with static communication graph and aims to render the initial edge set invariant~(static) through control. Thus, the connectivity-preserving coordination problem in this paper is the following:

\begin{problem}
Let a MAS with time-varying communication delays have $N$ agents with broadcasting radius $r$ and obey assumption~\ref{A1}. Design distributed control laws that use only the agents' information and their neighbours' delayed positions to render the original edge set $\mathcal{E}(0)$ invariant and coordinate the MAS.
\end{problem}  

In addition to assumption~\ref{A1}, the stability analysis in the next section uses the following lemma~\cite{Nuno2009IJRR}:
\begin{enumerate}[label=L.\arabic*]
\item \label{L1}
For any vectors $\fbm{x}, \fbm{y}\in\mathbb{R}^{n}$, any variable time delay bounded by $0\leq d(t)\leq \overline{d}<\infty$ and any constant $\alpha>0$, the following inequality holds
\begin{align*}
2\int^{t}_{0}\tbm{x}(\sigma)\int^{0}_{-d(\sigma)}\fbm{y}(\sigma+\theta)d\theta d\sigma\leq \alpha\|\fbm{x}\|^{2}_{2}+\frac{\overline{d}^{2}}{\alpha}\|\fbm{y}\|^{2}_{2}\textrm{,}
\end{align*}
where $\|\fbm{x}\|_{2}$ is the $\mathcal{L}_{2}$-norm of $\fbm{x}$, defined by $\|\fbm{x}\|_{2}^{2}=\int^{t}_{0}\tbm{x}(\sigma)\fbm{x}(\sigma)d\sigma$.
\end{enumerate}

\section{Connectivity-Preserving Coordination}\label{Sec: connectivity-preserving coordination}

The dynamics of a MAS with $N$ single-integrator $n$-dimensional agents are:
\begin{equation}\label{equ1}
\dotbm{x}_{i}=\fbm{u}_{i}\textrm{,}
\end{equation}
where: $i\in {1,\cdots,N}$ indexes the agent; and $\fbm{x}_{i}$, $\dotbm{x}_{i}$ and $\fbm{u}_{i}$ are the agent's position, velocity and control input. The potential $\psi(|\fbm{x}_{ij}|)$ used in this paper to quantify the energy stored in the inter-agent connections belongs to a class of functions with the following properties:
\begin{enumerate}
\item[1.]\label{P1}
$\psi(|\fbm{x}_{ij}|)\in C^{1}$ is positive definite and strictly increasing with respect to $|\fbm{x}_{ij}|$ for $|\fbm{x}_{ij}|\in [0,r]$.
\item[2.]\label{P2}
$\frac{N(N-1)}{2}\psi(r-\epsilon)<\psi(r)$.
\item[3.]\label{P3}
$\boldsymbol{\zeta}(\nabla_{i}\psi(|\fbm{x}_{ij}|)-\nabla_{i}\psi(|\fbm{x}^{d}_{ij}|))\leq \gamma\boldsymbol{\zeta}(\fbm{x}_{j}-\fbm{x}_{jd})+\eta\bar{x}_{j}\fbm{1}$ for bounded $\boldsymbol{\zeta}(\fbm{x}_{ij})$ and $\boldsymbol{\zeta}(\fbm{x}^{d}_{ij})$, where: $\boldsymbol{\zeta}(\cdot):\mathbb{R}^{n}\to \mathbb{R}^{n}$ maps the components of a vector to their absolute values $\boldsymbol{\zeta}(\fbm{x})=(|x_1|\cdots |x_n|)^\mathsf{T}$; $\nabla_{i}\psi(\cdot)$ are the gradients of $\psi(\cdot)$ with respect to $\fbm{x}_{i}$, $\bar{x}_{j}=\|\fbm{x}_{j}-\fbm{x}_{jd}\|_{\infty}$; $\gamma$ and $\eta$ are positive constants; and $\fbm{1}=[1,\cdots,1]^\mathsf{T}$.
\item[4.]\label{P4}
$\nabla_{i}\psi(|\fbm{x}_{ij}|)=h(|\fbm{x}_{ij}|)(\fbm{x}_{i}-\fbm{x}_{j})$, where $h(|\fbm{x}_{ij}|)$ is increasing with respect to $|\fbm{x}_{ij}|$ and is positively lower-bounded for $|\fbm{x}_{ij}|\in[0,r]$.
\end{enumerate}

The particular potential with the above properties selected in this paper is $\psi(|\fbm{x}_{ij}|)=\frac{|\fbm{x}_{ij}|^{2}}{r^{2}-|\fbm{x}_{ij}|^{2}+Q}$, with $Q>0$ a parameter to be determined, and provides the following control laws:
\begin{equation}\label{equ2}
\dotbm{u}_{i}=-p\sum_{j\in\mathcal{N}_{i}}\nabla_{i}\psi(|\fbm{x}^{d}_{ij}|)-\fbm{K}_{i}\fbm{u}_{i}\textrm{,}
\end{equation}
where $\mathcal{N}_{i}$ is the set of neighbours of agent $i$ at time $t$; $\nabla_{i}\psi(|\fbm{x}^{d}_{ij}|)$ are the gradients of $\psi(|\fbm{x}^{d}_{ij}|)$ with respect to $\fbm{x}_{i}$; and $\fbm{K}_{i}=\text{diag}\{k_{ik}\}$ with $k=1,\cdots,n$ are positive diagonal gain matrices to be determined. The gradient of the selected potential with respect to $\fbm{x}_{i}$ being:
\begin{align*}
\nabla_{i}\psi(|\fbm{x}^{d}_{ij}|)=\frac{2(r^{2}+Q)}{(r^{2}-|\fbm{x}^{d}_{ij}|^{2}+Q)^{2}}\fbm{x}^{d}_{ij}\textrm{,}
\end{align*}    
the designed controls~\eqref{equ2} are proportional controls whose state-dependent gains $h(|\fbm{x}^{d}_{ij}|)=\frac{2(r^{2}+Q)}{(r^{2}-|\fbm{x}^{d}_{ij}|^{2}+Q)^{2}}$ increase with $|\fbm{x}^{d}_{ij}|$ on $[0,\sqrt{r^{2}+Q})$. In other words, the controls~\eqref{equ2} stiffen the inter-agent couplings when the connections are threatened. 

A property of the selected potential, needed in the connectivity preservation and coordination proofs, is the following:
\begin{proposition}
For the function $\psi(|\fbm{x}_{ij}|)=\frac{|\fbm{x}_{ij}|^{2}}{r^{2}-|\fbm{x}_{ij}|^{2}+Q}$, proper selection of $Q$ guarantees that $\frac{N(N-1)}{2}\psi(r-\epsilon)<\psi(r)$.
\end{proposition}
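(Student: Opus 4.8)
The plan is to evaluate both sides of the claimed inequality in closed form and to exploit the structural asymmetry that, at the boundary $|\fbm{x}_{ij}|=r$, the variable term $r^{2}-|\fbm{x}_{ij}|^{2}$ vanishes, so that the denominator of $\psi$ collapses to $Q$ alone. Writing $s=|\fbm{x}_{ij}|$ for brevity, I would first compute $\psi(r)=\frac{r^{2}}{Q}$, which blows up as $Q\to 0^{+}$, and then $\psi(r-\epsilon)=\frac{(r-\epsilon)^{2}}{\epsilon(2r-\epsilon)+Q}$, using the identity $r^{2}-(r-\epsilon)^{2}=\epsilon(2r-\epsilon)$. Since assumption~\ref{A1} forces $0<\epsilon\leq r$, the quantity $\epsilon(2r-\epsilon)$ is strictly positive, so $\psi(r-\epsilon)$ remains bounded above by $\frac{(r-\epsilon)^{2}}{\epsilon(2r-\epsilon)}$ uniformly in $Q>0$. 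The diverging $\psi(r)$ set against the bounded $\psi(r-\epsilon)$ is the entire mechanism behind the result.

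Next I would make this precise by clearing denominators. Because property~1 guarantees that $r^{2}-s^{2}+Q>0$ on $[0,r]$, all four quantities involved are positive and cross-multiplication preserves the direction of the inequality. The target $\frac{N(N-1)}{2}\psi(r-\epsilon)<\psi(r)$ is then equivalent to the single inequality, linear in $Q$,
\begin{align*}
Q\left(\tfrac{N(N-1)}{2}(r-\epsilon)^{2}-r^{2}\right)<r^{2}\,\epsilon(2r-\epsilon)\textrm{.}
\end{align*}

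The conclusion follows from a short case split on the sign of the coefficient $\tfrac{N(N-1)}{2}(r-\epsilon)^{2}-r^{2}$: if it is nonpositive, the left-hand side is $\leq 0$ while the right-hand side is strictly positive, so every $Q>0$ works; if it is positive, any
\begin{align*}
0<Q<\frac{r^{2}\,\epsilon(2r-\epsilon)}{\tfrac{N(N-1)}{2}(r-\epsilon)^{2}-r^{2}}
\end{align*}
meets the requirement. In either case a valid $Q$ exists, which establishes the claim. I do not anticipate a genuine obstacle here: the argument is elementary algebra, and the only points demanding care are verifying that $\epsilon(2r-\epsilon)>0$ (so that $\psi(r-\epsilon)$ is finite and the cross-multiplication is legitimate) and recognizing that it is the cancellation of $r^{2}-s^{2}$ at $s=r$, rather than any delicate estimate, that drives the separation between the two values.
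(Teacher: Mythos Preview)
Your argument is correct and essentially identical to the paper's own proof: both evaluate $\psi(r)=r^{2}/Q$ and $\psi(r-\epsilon)$, cross-multiply to reduce the inequality to one linear in $Q$, and then split on the sign of $\tfrac{N(N-1)}{2}(r-\epsilon)^{2}-r^{2}$ to exhibit a valid $Q$. The only cosmetic difference is that you factor $r^{2}-(r-\epsilon)^{2}$ as $\epsilon(2r-\epsilon)$ and add the heuristic remark about $\psi(r)$ diverging as $Q\to 0^{+}$, but the substance matches the paper exactly.
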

\begin{proof}
$\frac{N(N-1)}{2}\psi(r-\epsilon)=\frac{N(N-1)}{2}\frac{(r-\epsilon)^{2}}{r^{2}-(r-\epsilon)^{2}+Q}$ and $\psi(r)=\frac{r^{2}}{Q}$, so $\frac{N(N-1)}{2}\psi(r-\epsilon)<\psi(r)$ implies that $\frac{N(N-1)}{2}\frac{(r-\epsilon)^{2}}{r^{2}-(r-\epsilon)^{2}+Q}<\frac{r^{2}}{Q}$, i.e., $N(N-1)(r-\epsilon)^{2}Q<2r^{2}[r^{2}-(r-\epsilon)^{2}+Q]$, and further $[N(N-1)(r-\epsilon)^{2}-2r^{2}]Q<2r^{2}[r^{2}-(r-\epsilon)^{2}]$. This can be guaranteed by selecting $Q<\frac{2r^{2}[r^{2}-(r-\epsilon)^{2}]}{N(N-1)(r-\epsilon)^{2}-2r^{2}}$ if $N(N-1)>\frac{2r^{2}}{(r-\epsilon)^{2}}$; and $Q>0$ arbitrarily otherwise.  
\end{proof}

A property of time-delayed signals needed in the proofs is:
\begin{proposition}
Given any vectors $\fbm{x}\in\mathbb{R}^{n}$, $\fbm{y}\in\mathbb{R}^{n}$ and any variable time delays $d(t)$ bounded by $0\leq d(t)\leq \overline{d}$, they obey
\begin{align*}
\int^{t}_{0}\tbm{x}(\sigma)\bar{y}(\sigma)\fbm{1}d\sigma\leq\frac{\alpha}{2}\|\fbm{x}\|^{2}_{2}+\frac{n\overline{d}^{2}}{2\alpha}\|\fbm{y}\|^{2}_{2}\textrm{,}
\end{align*}
for some $\alpha>0$ and $\bar{y}(\sigma)=\max\limits_{n}\boldsymbol{\zeta}\left(\int^{\sigma}_{\sigma-d(\sigma)}\fbm{y}(\theta)d\theta\right)$.
\end{proposition}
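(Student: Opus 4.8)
The plan is to peel off the $\fbm{x}$-dependence first through a componentwise Young's inequality, and then to control the remaining delay term by Cauchy--Schwarz followed by a change in the order of integration. The factor $n$ and the factor $\overline{d}^{2}$ on the right-hand side enter at two distinct stages, which makes it natural to separate these two steps.

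First I would write the integrand as $\bar{y}(\sigma)\,\tbm{x}(\sigma)\fbm{1}=\sum_{k=1}^{n}x_{k}(\sigma)\,\bar{y}(\sigma)$ and apply Young's inequality $x_{k}\bar{y}\leq\frac{\alpha}{2}x_{k}^{2}+\frac{1}{2\alpha}\bar{y}^{2}$ to each summand (this holds for every real $x_{k}$ since $\bar{y}\geq0$). Summing over the $n$ components gives, pointwise in $\sigma$,
\begin{align*}
\bar{y}(\sigma)\,\tbm{x}(\sigma)\fbm{1}\leq\frac{\alpha}{2}\tbm{x}(\sigma)\fbm{x}(\sigma)+\frac{n}{2\alpha}\bar{y}(\sigma)^{2}\textrm{.}
\end{align*}
Integrating over $[0,t]$ immediately yields the desired $\frac{\alpha}{2}\|\fbm{x}\|^{2}_{2}$ together with the residual term $\frac{n}{2\alpha}\int^{t}_{0}\bar{y}(\sigma)^{2}d\sigma$; it is precisely the summation over the $n$ components that produces the factor $n$.

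The remaining task is to show $\int^{t}_{0}\bar{y}(\sigma)^{2}d\sigma\leq\overline{d}^{2}\|\fbm{y}\|^{2}_{2}$. Writing $\bar{y}(\sigma)=\max_{k}\left|\int^{\sigma}_{\sigma-d(\sigma)}y_{k}(\theta)\,d\theta\right|$, Cauchy--Schwarz applied to each component gives $\left(\int^{\sigma}_{\sigma-d(\sigma)}y_{k}\,d\theta\right)^{2}\leq d(\sigma)\int^{\sigma}_{\sigma-d(\sigma)}y_{k}^{2}\,d\theta$; taking the maximum over $k$, bounding it by the sum over $k$, and using $d(\sigma)\leq\overline{d}$ produces the pointwise estimate
\begin{align*}
\bar{y}(\sigma)^{2}\leq\overline{d}\int^{\sigma}_{\sigma-d(\sigma)}\tbm{y}(\theta)\fbm{y}(\theta)\,d\theta\textrm{.}
\end{align*}
Integrating in $\sigma$ and swapping the order of integration in the resulting double integral, each fixed $\theta$ is admissible only for $\sigma$ contained in an interval of length at most $\overline{d}$, so the double integral is bounded by $\overline{d}\int^{t}_{0}\tbm{y}(\theta)\fbm{y}(\theta)\,d\theta=\overline{d}\|\fbm{y}\|^{2}_{2}$. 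This contributes the second factor $\overline{d}$ and gives $\int^{t}_{0}\bar{y}(\sigma)^{2}d\sigma\leq\overline{d}^{2}\|\fbm{y}\|^{2}_{2}$; substituting back into the first display completes the argument.

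The step I expect to be the main obstacle is the change in the order of integration with a time-varying delay. One must argue carefully that, for each fixed $\theta$, the set of $\sigma$ satisfying $\sigma-d(\sigma)\leq\theta\leq\sigma$ has measure at most $\overline{d}$: this follows because $\sigma-d(\sigma)\leq\theta$ forces $\sigma\leq\theta+d(\sigma)\leq\theta+\overline{d}$, so the admissible $\sigma$ lie in $[\theta,\theta+\overline{d}]$, and it relies only on the bound $d(\sigma)\leq\overline{d}$ rather than on any monotonicity of $d(\sigma)$. One must also handle the history interval where $\sigma-d(\sigma)<0$, which is dispatched by the standing convention that $\fbm{y}$ is accounted for as zero prior to $t=0$, consistent with the definition $\|\fbm{y}\|^{2}_{2}=\int^{t}_{0}\tbm{y}(\sigma)\fbm{y}(\sigma)d\sigma$ in Lemma~\ref{L1}. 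The Young and Cauchy--Schwarz estimates are otherwise routine.
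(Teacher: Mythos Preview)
Your argument is correct and follows essentially the same two-stage route as the paper: first reduce to $\frac{\alpha}{2}\|\fbm{x}\|^{2}_{2}+\frac{n}{2\alpha}\int_{0}^{t}\bar{y}^{2}$, then bound $\int_{0}^{t}\bar{y}^{2}\leq\overline{d}^{2}\|\fbm{y}\|^{2}_{2}$ via Cauchy--Schwarz and an interchange of integration. The only cosmetic differences are that the paper obtains the first stage by Cauchy--Schwarz in time followed by Young on the resulting product of norms (rather than your pointwise componentwise Young), and carries out the interchange by the substitution $\theta\mapsto\sigma+\theta$ and enlarging the inner domain to $[-\overline{d},0]$ (rather than your direct Fubini measure estimate); both executions yield identical bounds.
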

\begin{proof}
The Cauchy-Schwartz inequality yields 
\begin{align*}
&\int^{t}_{0}\tbm{x}(\sigma)\bar{y}(\sigma)\fbm{1}d\sigma\leq\left(\int^{t}_{0}|\fbm{x}(\sigma)|^{2}d\sigma\right)^{\frac{1}{2}}\left(\int^{t}_{0}n\bar{y}(\sigma)^{2}d\sigma\right)^{\frac{1}{2}}\\
&\leq\frac{\alpha}{2}\|\fbm{x}\|^{2}_{2}+\frac{n}{2\alpha}\int^{t}_{0}\bar{y}(\sigma)^{2}d\sigma\textrm{,}
\end{align*}
where $|\fbm{x}|^{2}=\tbm{x}\fbm{x}$. Then, $\bar{y}(\sigma)^{2}$ can be upper-bounded by
\begin{align*}
\bar{y}(\sigma)^{2}\leq\Big|\int^{\sigma}_{\sigma-d(\sigma)}\fbm{y}(\theta)d\theta\Big|^{2}\leq d(\sigma)\int^{\sigma}_{\sigma-d(\sigma)}|\fbm{y}(\theta)|^{2}d\theta\textrm{,}
\end{align*}
after applying the Cauchy-Schwartz inequality again. Lastly, using $d(\sigma)\leq\overline{d}$ and inverting the integration order leads to
\begin{align*}
&\int^{t}_{0}\tbm{x}(\sigma)\bar{y}(\sigma)\fbm{1}d\sigma\leq\frac{\alpha}{2}\|\fbm{x}\|^{2}_{2}+\frac{n\overline{d}}{2\alpha}\int^{t}_{0}\int^{\sigma}_{\sigma-d(\sigma)}|\fbm{y}(\theta)|^{2}d\theta d\sigma\\
&\leq\frac{\alpha}{2}\|\fbm{x}\|^{2}_{2}+\frac{n\overline{d}}{2\alpha}\int^{t}_{0}\int^{0}_{-d(\sigma)}|\fbm{y}(\sigma+\theta)|^{2}d\theta d\sigma\\
&\leq\frac{\alpha}{2}\|\fbm{x}\|^{2}_{2}+\frac{n\overline{d}}{2\alpha}\int^{t}_{0}\int^{0}_{-\overline{d}}|\fbm{y}(\sigma+\theta)|^{2}d\theta d\sigma\\
&=\frac{\alpha}{2}\|\fbm{x}\|^{2}_{2}+\frac{n\overline{d}}{2\alpha}\int^{0}_{-\overline{d}}\int^{t}_{0}|\fbm{y}(\sigma+\theta)|^{2}d\sigma d\theta\\
&\leq\frac{\alpha}{2}\|\fbm{x}\|^{2}_{2}+\frac{n\overline{d}}{2\alpha}\int^{0}_{-\overline{d}}\|\fbm{y}\|^{2}_{2}d\theta=\frac{\alpha}{2}\|\fbm{x}\|^{2}_{2}+\frac{n\overline{d}^{2}}{2\alpha}\|\fbm{y}\|^{2}_{2}\textrm{.}
\end{align*}
\end{proof}

The Lyapunov function that quantifies the energy of the single-integrator MAS in closed loop with the controls~\eqref{equ2} is:
\begin{equation}\label{equ3}
V=\frac{p}{2}\sum^{N}_{i=1}\sum_{j\in\mathcal{N}_{i}(0)}\psi(|\fbm{x}_{ij}|)+\frac{1}{2}\sum^{N}_{i=1}\tbm{u}_{i}\fbm{u}_{i}\textrm{.}
\end{equation}
Assumption~\ref{A1} together with the first property of $\psi(\cdot)$ imply that $V(0)\geq 0$. The selection $\fbm{u}_{i}(0)=\fbm{0}$ and the second property of $\psi(\cdot)$ lead to $V(0)\leq\frac{N(N-1)p}{2}\psi(r-\epsilon)<p\psi(r)$.  Then, because $V$ is continuous, rendering it non-increasing is sufficient for connectivity preservation, i.e.,  $V(t)\leq V(0)<p\psi(r) \Rightarrow$ $\psi(|\fbm{x}_{ij}(t)|)<\psi(r)$ for all $(i,j)\in\mathcal{E}(0)$ and $t>0$. If all initial connections are maintained, then $\psi(|\fbm{x}_{ij}|)\geq 0$ for all $(i,j)\in\mathcal{E}(0)$ and $t>0$, and $V$ is positive definite with respect to $\fbm{x}_{ij}$ and $\fbm{u}_{i}$. Then, coordination can be shown by showing that $V\to 0$ as $t\to \infty$.  

By continuity of $V$, there exists $t_{1}>0$ such that $\mathcal{N}_{i}=\mathcal{N}_{i}(0)$ on $[0,t_{1}]$, $\forall i=1,\cdots,N$. The derivative of $V$ along the closed-loop dynamics \eqref{equ1} and \eqref{equ2} on $[0,t_{1}]$ are:
\begin{align*}
\dot{V}=&\frac{p}{2}\sum^{N}_{i=1}\sum_{j\in\mathcal{N}_{i}(0)}\left[\dottbm{x}_{i}\nabla_{i}\psi(|\fbm{x}_{ij}|)+\dottbm{x}_{j}\nabla_{j}\psi(|\fbm{x}_{ij}|)\right]+\sum^{N}_{i=1}\tbm{u}_{i}\dotbm{u}_{i}\\
=&\frac{p}{2}\left(\sum^{N}_{i=1}\sum_{j\in\mathcal{N}_{i}(0)}\dottbm{x}_{i}\nabla_{i}\psi(|\fbm{x}_{ij}|)+\sum^{N}_{j=1}\sum_{i\in\mathcal{N}_{j}(0)}\dottbm{x}_{j}\nabla_{j}\psi(|\fbm{x}_{ij}|)\right)\\
&+\sum^{N}_{i=1}\tbm{u}_{i}\left(-p\sum_{j\in\mathcal{N}_{i}}\nabla_{i}\psi(|\fbm{x}^{d}_{ij}|)-\fbm{K}_{i}\fbm{u}_{i}\right)\\
=&p\sum^{N}_{i=1}\sum_{j\in\mathcal{N}_{i}(0)}\dottbm{x}_{i}\left[\nabla_{i}\psi(|\fbm{x}_{ij}|)-\nabla_{i}\psi(|\fbm{x}^{d}_{ij}|)\right]-\sum^{N}_{i=1}\dottbm{x}_{i}\fbm{K}_{i}\dotbm{x}_{i}\textrm{,}
\end{align*}
where the system dynamics $\dotbm{x}_{i}=\fbm{u}_{i}$ and the initial undirected and connected interaction graph have been applied. From the third property of the potential $\psi(\cdot)$, it follows that $\boldsymbol{\zeta}\left(\nabla_{i}\psi(|\fbm{x}_{ij}|)-\nabla_{i}\psi(|\fbm{x}^{d}_{ij}|)\right)\leq\gamma\boldsymbol{\zeta}\left(\fbm{x}_{j}-\fbm{x}_{jd}\right)+\eta\bar{x}_{j}\fbm{1}\leq \gamma\int^{t}_{t-d_{ji}(t)}|\dotbm{x}_{j}(\theta)|d\theta+\eta\bar{x}_{j}\fbm{1}$ and $\dot{V}$ can be upper-bounded by:
\begin{align*}
\dot{V}\leq&p\sum^{N}_{i=1}\sum_{j\in\mathcal{N}_{i}(0)}\gamma\boldsymbol{\zeta}\left(\dotbm{x}_{i}\right)^\mathsf{T}\int^{t}_{t-d_{ji}(t)}\boldsymbol{\zeta}\left(\dotbm{x}_{j}(\theta)\right)d\theta\\
&+p\sum^{N}_{i=1}\sum_{j\in\mathcal{N}_{i}(0)}\eta\bar{x}_{j}\boldsymbol{\zeta}\left(\dotbm{x}_{i}\right)^\mathsf{T}\fbm{1}-\sum^{N}_{i=1}\dottbm{x}_{i}\fbm{K}_{i}\dotbm{x}_{i}\textrm{.}
\end{align*}
Lemma~\ref{L1} and time integration of the above inequality yield
\begin{equation}\label{equ4}
\begin{aligned}
V(t)\leq &p\gamma\sum^{N}_{i=1}\sum_{j\in\mathcal{N}_{i}(0)}\int^{t}_{0}\boldsymbol{\zeta}\left(\dotbm{x}_{i}(\sigma)\right)^\mathsf{T}\int^{\sigma}_{\sigma-d_{ji}(\sigma)}\boldsymbol{\zeta}\left(\dotbm{x}_{j}(\theta)\right)d\theta d\sigma\\
&+p\eta\sum^{N}_{i=1}\sum_{j\in\mathcal{N}_{i}(0)}\int^{t}_{0}\boldsymbol{\zeta}\left(\dotbm{x}_{i}(\sigma)\right)^\mathsf{T}\bar{x}_{j}(\sigma)\fbm{1}d\sigma\\
&-\sum^{N}_{i=1}\int^{t}_{0}\dottbm{x}_{i}(\sigma)\fbm{K}_{i}\dotbm{x}_{i}(\sigma)d\sigma+V(0)\\
\leq&-\sum^{N}_{i=1}k_{i}\|\dotbm{x}_{i}\|^{2}_{2}+\sum^{N}_{i=1}\sum_{j\in\mathcal{N}_{i}(0)}\Bigg(\frac{\alpha_{ij}p(\gamma+\eta)}{2}\|\dotbm{x}_{i}\|^{2}_{2}\\
&\ \ +\frac{p(\gamma+n\eta)\overline{d}^{2}_{ji}}{2\alpha_{ij}}\|\dotbm{x}_{j}\|^{2}_{2}\Bigg)+V(0)\textrm{,}
\end{aligned}
\end{equation}
with $\alpha_{ij}$ positive constants and $k_{i}$ the smallest eigenvalues of $\fbm{K}_{i}$. 

Letting $\bm{\chi}=[\|\dotbm{x}_{1}\|^{2}_{2},\cdots,\|\dotbm{x}_{N}\|^{2}_{2}]^\mathsf{T}$, \eq{equ4} can further be rewritten as $V(t)\leq V(0)-\tbm{1}\bm{\Phi}\bm{\chi}$, where $\bm{\Phi}_{N\times N}=[\phi_{ij}]$ with $\phi_{ij}=k_{i}-\frac{p(\gamma+\eta)}{2}\sum_{j\in\mathcal{N}_{i}(0)}\alpha_{ij}$ if $j=i$, $\phi_{ij}=-\frac{p(\gamma+n\eta)\overline{d}^{2}_{ji}}{2\alpha_{ij}}$ if $j\neq i$ and $j\in\mathcal{N}_{i}(0)$, and $\phi_{ij}=0$ otherwise. It then follows that $V(t)\leq V(0)$ if all column summations of $\bm{\Phi}$ are positive, i.e.,
\begin{equation}\label{equ5}
k_{i}>\sum_{j\in\mathcal{N}_{i}(0)}\left(\frac{\alpha_{ij}p(\gamma+\eta)}{2}+\frac{p(\gamma+n\eta)\overline{d}^{2}_{ij}}{2\alpha_{ji}}\right)\textrm{.}
\end{equation}
In other words, $p\psi(|\fbm{x}_{ij}|)\leq V(t)\leq V(0)<p\psi(r)$, $\forall (i,j)\in\mathcal{E}(0)$ and connectivity is maintained if the damping gains $\fbm{K}_{i}$ can be selected sufficiently large to satisfy~\eqref{equ5}.

The strict inequality~\eqref{equ5} means that there exists $\bm{\lambda}_{N\times 1}$ with strictly positive components such that $V(0)-V(t)\geq \tfbm{\lambda}\bm{\chi}$, and thus $\dotbm{x}_{i}\in\mathcal{L}_{2}$, $\forall i=1,\cdots,N$. In turn, $V(t)\leq V(0)$ indicates that $\fbm{u}_{i}=\dotbm{x}_{i}\in\mathcal{L}_{\infty}$. Then, the derivative of the closed-loop dynamics~\eqref{equ1} and~\eqref{equ2} imply that $\ddotbm{x}_{i}\in\mathcal{L}_{\infty}$. Using Barbalat's lemma, it follows that $\dotbm{x}_{i}\to\fbm{0}$ as $t\to +\infty$, $\forall i=1,\cdots,N$. The derivative of the controller dynamics~\eqref{equ2} means $\dddotbm{x}_{i}\in\mathcal{L}_{\infty}$, and thus Barbalat's lemma indicates $\sum_{j\in\mathcal{N}^{0}_{i}}\nabla_{i}\psi(|\fbm{x}^{d}_{ij}|)\to\fbm{0}$, i.e., $\sum_{j\in\mathcal{N}_{i}(0)}h(|\fbm{x}^{d}_{ij}|)(\fbm{x}_{i}(t)-\fbm{x}_{j}(t-d_{ji}(t)))\to\fbm{0}$. From $\dotbm{x}_{i}\to\fbm{0}$, it follows that $\fbm{x}^{d}_{ij}=\fbm{x}_{i}(t)-\fbm{x}_{j}(t-d_{ji}(t))=\fbm{x}_{i}(t)-\fbm{x}_{j}(t)+\int^{t}_{t-d_{ji}(t)}\dotbm{x}_{j}(\theta)d\theta\to\fbm{x}_{ij}$. Therefore, $\sum_{j\in\mathcal{N}_{i}(0)}h(|\fbm{x}_{ij}|)(\fbm{x}_{i}-\fbm{x}_{j})\to\fbm{0}$, $\forall i=1,\cdots,N$, i.e., $(\fbm{L}(\fbm{x})\otimes\fbm{I}_{n})\fbm{x}\to\fbm{0}$, where $\fbm{L}(\fbm{x})=[l_{ij}(\fbm{x})]$ with $l_{ij}(\fbm{x})=\sum_{k\in\mathcal{N}_{i}(0)}h(|\fbm{x}_{ik}|)$ for $j=i$, $l_{ij}(\fbm{x})=-h(|\fbm{x}_{ij}|)$ for $j\neq i\ \text{and}\ j\in\mathcal{N}_{i}(0)$, and $l_{ij}(\fbm{x})=0$ for $j\neq i\ \text{and}\ j\notin\mathcal{N}_{i}(0)$.
Then, the interaction graph being undirected and connected, $\fbm{L}(\fbm{x})=\fbm{D}\fbm{W}(\fbm{x})\tbm{D}$ and $(\tbm{D}\otimes\fbm{I}_{n})\fbm{x}\to\fbm{0}$, which implies that $\fbm{x}_{1}\to\cdots\to\fbm{x}_{N}$, i.e., coordination is achieved.

The parameters $\gamma$ and $\eta$ in the design criterion~\eqref{equ5} can be specified for the potential $\psi(|\fbm{x}_{ij}|)=\frac{|\fbm{x}_{ij}|^{2}}{r^{2}-|\fbm{x}_{ij}|^{2}+Q}$ noting that the connectivity preservation and coordination proof shows $V(t^{+})\leq V(0)$ under the condition that $V(t^{-})\leq V(0)$. If there exist $\gamma$ and $\eta$ that satisfy the third property of $\psi(\cdot)$ on $\{(\fbm{x},\dotbm{x})|\ V(t)\leq V(0)<p\psi(r)\}$, then $\fbm{K}_{i}$-s that observe~\eqref{equ5} render the set invariant and maintain MAS connectivity.
\begin{proposition}
For the potential function $\psi(|\fbm{x}_{ij}|)=\frac{|\fbm{x}_{ij}|^{2}}{r^{2}-|\fbm{x}_{ij}|^{2}+Q}$ with $Q\geq 2pn^{2}\overline{d}^{2}_{ji}\psi(r)+2nr\overline{d}_{ji}\sqrt{2p\psi(r)}+\Delta$, where $\Delta>0$, there exist $\gamma\geq\frac{2(r^{2}+Q)}{\Delta^{2}}$ and $\eta\geq\frac{4(r^{2}+Q)^{2}(2r+n\overline{d}_{ji}\sqrt{2p\psi(r)})\sqrt{n}r}{Q^{2}\Delta^{2}}$ that guarantee the third property of $\psi(|\fbm{x}_{ij}|)$ on the set $\{(\fbm{x},\dotbm{x})|\ V(t)\leq V(0)<p\psi(r)\}$. 
\end{proposition}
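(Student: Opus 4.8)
The plan is to verify the third property of $\psi(\cdot)$ directly for the given potential by splitting the gradient difference into a ``gain-difference'' part (governed by the change in the state-dependent coefficient $h$) and a ``position-difference'' part (governed by the delay-induced displacement $\fbm{x}_{j}-\fbm{x}_{jd}$), and then bounding each part separately on the candidate invariant set. Recalling from the fourth property that $\nabla_{i}\psi(|\fbm{x}_{ij}|)=h(|\fbm{x}_{ij}|)\fbm{x}_{ij}$ with $h(s)=\frac{2(r^{2}+Q)}{(r^{2}-s^{2}+Q)^{2}}$, and observing that $\fbm{x}^{d}_{ij}=\fbm{x}_{ij}+(\fbm{x}_{j}-\fbm{x}_{jd})$, I would first write
\[
\nabla_{i}\psi(|\fbm{x}_{ij}|)-\nabla_{i}\psi(|\fbm{x}^{d}_{ij}|)=\big[h(|\fbm{x}_{ij}|)-h(|\fbm{x}^{d}_{ij}|)\big]\fbm{x}_{ij}-h(|\fbm{x}^{d}_{ij}|)(\fbm{x}_{j}-\fbm{x}_{jd}),
\]
so that applying $\boldsymbol{\zeta}(\cdot)$ componentwise and using the triangle inequality gives an upper bound that is the sum of $h(|\fbm{x}^{d}_{ij}|)\boldsymbol{\zeta}(\fbm{x}_{j}-\fbm{x}_{jd})$ and $|h(|\fbm{x}_{ij}|)-h(|\fbm{x}^{d}_{ij}|)|\,\boldsymbol{\zeta}(\fbm{x}_{ij})$.

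Next I would extract the velocity bound that makes everything finite. On the set $\{V(t)\le V(0)<p\psi(r)\}$, the kinetic term $\tfrac12\tbm{u}_{i}\fbm{u}_{i}=\tfrac12|\dotbm{x}_{i}|^{2}$ in \eqref{equ3} is dominated by $V(t)<p\psi(r)$, whence $|\dotbm{x}_{j}|<\sqrt{2p\psi(r)}$ at every time. Integrating over the delay window bounds $\bar{x}_{j}=\|\fbm{x}_{j}-\fbm{x}_{jd}\|_{\infty}\le\overline{d}_{ji}\sqrt{2p\psi(r)}$ and $|\fbm{x}_{j}-\fbm{x}_{jd}|\le\sqrt{n}\,\bar{x}_{j}$, so $|\fbm{x}^{d}_{ij}|\le|\fbm{x}_{ij}|+|\fbm{x}_{j}-\fbm{x}_{jd}|<r+n\overline{d}_{ji}\sqrt{2p\psi(r)}$. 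The hypothesis $Q\ge 2pn^{2}\overline{d}^{2}_{ji}\psi(r)+2nr\overline{d}_{ji}\sqrt{2p\psi(r)}+\Delta$ is exactly the condition that forces $r^{2}-|\fbm{x}^{d}_{ij}|^{2}+Q\ge\Delta>0$, so the delayed coefficient stays finite with $h(|\fbm{x}^{d}_{ij}|)\le\frac{2(r^{2}+Q)}{\Delta^{2}}$. Choosing $\gamma\ge\frac{2(r^{2}+Q)}{\Delta^{2}}$ then dominates the position-difference term, since $h(|\fbm{x}^{d}_{ij}|)\boldsymbol{\zeta}(\fbm{x}_{j}-\fbm{x}_{jd})\le\gamma\,\boldsymbol{\zeta}(\fbm{x}_{j}-\fbm{x}_{jd})$.

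For the gain-difference term I would factor, writing $a=|\fbm{x}_{ij}|$ and $b=|\fbm{x}^{d}_{ij}|$,
\[
h(a)-h(b)=\frac{2(r^{2}+Q)(a-b)(a+b)\big(2(r^{2}+Q)-a^{2}-b^{2}\big)}{(r^{2}-a^{2}+Q)^{2}(r^{2}-b^{2}+Q)^{2}},
\]
and bound each factor: $|a-b|\le|\fbm{x}_{j}-\fbm{x}_{jd}|\le\sqrt{n}\,\bar{x}_{j}$ by the reverse triangle inequality, $a+b\le 2r+n\overline{d}_{ji}\sqrt{2p\psi(r)}$, $2(r^{2}+Q)-a^{2}-b^{2}\le 2(r^{2}+Q)$, $(r^{2}-a^{2}+Q)^{2}\ge Q^{2}$ (since $a<r$), and $(r^{2}-b^{2}+Q)^{2}\ge\Delta^{2}$. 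Combined with $\boldsymbol{\zeta}(\fbm{x}_{ij})\le r\fbm{1}$ (each component is at most $|\fbm{x}_{ij}|<r$), these reproduce exactly $|h(a)-h(b)|\,\boldsymbol{\zeta}(\fbm{x}_{ij})\le\eta\bar{x}_{j}\fbm{1}$ with the stated $\eta=\frac{4(r^{2}+Q)^{2}(2r+n\overline{d}_{ji}\sqrt{2p\psi(r)})\sqrt{n}\,r}{Q^{2}\Delta^{2}}$. Adding the two bounds yields the third property.

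The main obstacle is that $|\fbm{x}^{d}_{ij}|$ may exceed the communication radius $r$, so the denominator $r^{2}-|\fbm{x}^{d}_{ij}|^{2}+Q$ could in principle vanish and blow up $h$; keeping it bounded below by $\Delta$ is the entire purpose of the lower bound on $Q$, and this requires the velocity bound $|\dotbm{x}_{j}|<\sqrt{2p\psi(r)}$, which is only available on the invariant set itself. This is the conditional structure flagged just before the proposition ($V(t^{+})\le V(0)$ \emph{given} $V(t^{-})\le V(0)$): the velocity and $|\fbm{x}^{d}_{ij}|$ bounds are read as holding on the candidate set, and the calculation then certifies that the third property, and hence via \eqref{equ5} the non-increase of $V$, is self-consistent. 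I would also carefully audit the norm conversions ($\|\cdot\|_{\infty}$ versus $|\cdot|$, i.e.\ the $\sqrt{n}$ versus $n$ factors), since that is where the stated constants are most prone to an extraneous factor.
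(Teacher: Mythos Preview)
Your proposal is correct and follows essentially the same route as the paper: the identical add-and-subtract decomposition into a position-difference term $h(|\fbm{x}^{d}_{ij}|)(\fbm{x}_{j}-\fbm{x}_{jd})$ and a gain-difference term $[h(|\fbm{x}_{ij}|)-h(|\fbm{x}^{d}_{ij}|)]\fbm{x}_{ij}$, the same velocity bound $|\dot{x}_{j,k}|<\sqrt{2p\psi(r)}$ read off from $V<p\psi(r)$, the same use of the $Q$ hypothesis to force $r^{2}-|\fbm{x}^{d}_{ij}|^{2}+Q\ge\Delta$, and the same difference-of-squares factorization of $h(a)-h(b)$ with the same factor-by-factor bounds yielding the stated $\gamma$ and $\eta$. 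Your caveat about the $\sqrt{n}$ versus $n$ bookkeeping is apt (the paper's constants are a bit loose in exactly that spot), but since $\sqrt{n}\le n$ the inequalities still go through with the proposition's stated values.
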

\begin{proof} On the set $\{(\fbm{x},\dotbm{x})|\ V\leq V(0)\}$, $|{x}_{ij}|<r$ and $|\dot{x}_{i,k}|\leq\sqrt{2p\psi(r)}$ $\forall k=1,\cdots,n$ and $\forall i,j=1,\cdots,N$. Because $|\fbm{x}^{d}_{ij}|=|\fbm{x}_{i}-\fbm{x}_{j}+\fbm{x}_{j}-\fbm{x}_{jd}|\leq|\fbm{x}_{ij}|+|\fbm{x}_{j}-\fbm{x}_{jd}|\leq r+n\overline{d}_{ji}\sqrt{2p\psi(r)}$, $\nabla_{i}\psi(|\fbm{x}_{ij}|)$ and $\nabla_{i}\psi(|\fbm{x}^{d}_{ij}|)$ both exist by selecting $Q\geq 2pn^{2}\overline{d}^{2}_{ji}\psi(r)+2nr\overline{d}_{ji}\sqrt{2p\psi(r)}+\Delta$. Then,  
\begin{align*}
&\nabla_{i}\psi(|\fbm{x}_{ij}|)-\nabla_{i}\psi(|\fbm{x}^{d}_{ij}|)\\
=&2(r^{2}+Q)\left[\frac{\fbm{x}_{i}-\fbm{x}_{j}}{(r^{2}-|\fbm{x}_{ij}|^{2}+Q)^{2}}-\frac{\fbm{x}_{i}-\fbm{x}_{jd}}{(r^{2}-|\fbm{x}^{d}_{ij}|^{2}+Q)^{2}}\right]\\
=&2(r^{2}+Q)\left[\frac{\fbm{x}_{i}-\fbm{x}_{j}}{(r^{2}-|\fbm{x}_{ij}|^{2}+Q)^{2}}-\frac{\fbm{x}_{i}-\fbm{x}_{j}}{(r^{2}-|\fbm{x}^{d}_{ij}|^{2}+Q)^{2}}\right]\\
&-\frac{2(r^{2}+Q)(\fbm{x}_{j}-\fbm{x}_{jd})}{(r^{2}-|\fbm{x}^{d}_{ij}|^{2}+Q)^{2}}\\
\leq&\frac{2(r^{2}+Q)(\fbm{x}_{i}-\fbm{x}_{j})}{(r^{2}-|\fbm{x}_{ij}|^{2}+Q)^{2}(r^{2}-|\fbm{x}^{d}_{ij}|^{2}+Q)^{2}}\Big[(r^{2}-|\fbm{x}^{d}_{ij}|^{2}\\
&+Q)^{2}-(r^{2}-|\fbm{x}_{ij}|^{2}+Q)^{2}\Big]+\gamma\boldsymbol{\zeta}\left(\fbm{x}_{j}-\fbm{x}_{jd}\right)\\
\end{align*}
\begin{align*}
=&\frac{2(r^{2}+Q)\left[2(r^{2}+Q)-|\fbm{x}_{ij}|^{2}-|\fbm{x}^{d}_{ij}|^{2}\right]}{(r^{2}-|\fbm{x}_{ij}|^{2}+Q)^{2}(r^{2}-|\fbm{x}^{d}_{ij}|^{2}+Q)^{2}}\cdot (|\fbm{x}_{ij}|^{2}\\
&-|\fbm{x}^{d}_{ij}|^{2})(\fbm{x}_{i}-\fbm{x}_{j})+\gamma\boldsymbol{\zeta}\left(\fbm{x}_{j}-\fbm{x}_{jd}\right)\\
\leq&\eta\bar{x}_{j}\fbm{1}+\gamma \boldsymbol{\zeta}\left(\fbm{x}_{j}-\fbm{x}_{jd}\right)\textrm{,}
\end{align*}
because $|\fbm{x}_{ij}|^{2}-|\fbm{x}^{d}_{ij}|^{2}=(|\fbm{x}_{ij}|+|x^{d}_{ij}|)(|\fbm{x}_{ij}|-|\fbm{x}^{d}_{ij}|)\leq (2r+n\overline{d}_{ji}\sqrt{2p\psi(r)})|\fbm{x}_{ij}-\fbm{x}^{d}_{ij}|\leq (2r+n\overline{d}_{ji}\sqrt{2p\psi(r)})\sqrt{n}\bar{x}_{j}$, where $\eta=\frac{4(r^{2}+Q)^{2}(2r+n\overline{d}_{ji}\sqrt{2p\psi(r)})\sqrt{n}r}{Q^{2}\Delta^{2}}$ and $\gamma=\frac{2(r^{2}+Q)}{\Delta^{2}}$.
\end{proof}

The above analysis is summarized in the following theorem:
\begin{theorem}
Given a single-integrator MAS with time-varying delays that satisfies assumption~\ref{A1}, the controls~\eqref{equ2} maintain the initial connectivity of the MAS and coordinate it if the damping gains $\fbm{K}_{i}$ are selected to satisfy the conditions~\eqref{equ5}.
\end{theorem}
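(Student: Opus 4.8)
The plan is to treat $V$ in \eqref{equ3} simultaneously as a connectivity certificate and a convergence functional. First I would pin down the initial value: assumption~\ref{A1} with the first property of $\psi(\cdot)$ gives $V(0)\geq 0$, while $\fbm{u}_{i}(0)=\fbm{0}$ together with the second property of $\psi(\cdot)$ (secured by the admissible choice of $Q$) gives $V(0)\leq\frac{N(N-1)p}{2}\psi(r-\epsilon)<p\psi(r)$. Since the paper fixes a static edge set, no link is ever added, so connectivity can be lost only if some edge distance grows to $r$; by continuity of $V$ it is therefore enough to keep every edge potential strictly below $\psi(r)$, which follows once $V$ is shown to be non-increasing.

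The heart of the argument is an invariance/bootstrap step. By continuity there is a maximal $t_{1}>0$ on which $\mathcal{N}_{i}=\mathcal{N}_{i}(0)$ for all $i$; on $[0,t_{1}]$ I would differentiate $V$ along \eqref{equ1} and \eqref{equ2}, cancel the undelayed gradients against the coordinating control, and isolate the delay mismatch $\nabla_{i}\psi(|\fbm{x}_{ij}|)-\nabla_{i}\psi(|\fbm{x}^{d}_{ij}|)$. Bounding this mismatch through the third property of $\psi(\cdot)$ and then estimating the resulting cross terms with Lemma~\ref{L1} (for the $\gamma$-term) and the time-delayed-signal proposition (for the $\eta$-term) yields the integrated inequality $V(t)\leq V(0)-\tbm{1}\bm{\Phi}\bm{\chi}$, where $\bm{\chi}$ stacks the squared norms $\|\dotbm{x}_{i}\|^{2}_{2}$. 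Condition \eqref{equ5} forces every column sum of $\bm{\Phi}$ to be positive, so $V(t)\leq V(0)<p\psi(r)$ throughout $[0,t_{1}]$; hence every edge remains strictly inside radius $r$ at $t_{1}$, no link can break, and $t_{1}$ extends. \emph{The step I expect to be the main obstacle is closing this bootstrap rigorously}: because the neighbour sets enter the control \eqref{equ2}, the estimate is valid only while the graph is frozen, so one must argue that $V(t^{-})\leq V(0)<p\psi(r)$ propagates to $V(t^{+})\leq V(0)$ and thereby renders $\mathcal{E}(0)$ invariant for all $t\geq 0$. This in turn presupposes that admissible $\gamma,\eta$ verifying the third property actually exist on the sublevel set $\{(\fbm{x},\dotbm{x})\,|\,V(t)\leq V(0)<p\psi(r)\}$, which the preceding proposition supplies for the chosen potential.

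With $V(t)\leq V(0)$ valid for all $t$, coordination follows from a Barbalat chain. The strict slack in \eqref{equ5} gives $\tfbm{\lambda}\bm{\chi}\leq V(0)-V(t)$ for some positive $\bm{\lambda}$, hence $\dotbm{x}_{i}\in\mathcal{L}_{2}$; boundedness of $V$ gives $\fbm{u}_{i}=\dotbm{x}_{i}\in\mathcal{L}_{\infty}$, and \eqref{equ2} then gives $\ddotbm{x}_{i}\in\mathcal{L}_{\infty}$, so Barbalat's lemma yields $\dotbm{x}_{i}\to\fbm{0}$. A further differentiation of \eqref{equ2} keeps every term bounded, so $\dddotbm{x}_{i}\in\mathcal{L}_{\infty}$, and a second application of Barbalat's lemma forces $\sum_{j\in\mathcal{N}_{i}(0)}\nabla_{i}\psi(|\fbm{x}^{d}_{ij}|)\to\fbm{0}$. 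Because the delays are bounded and $\dotbm{x}_{j}\to\fbm{0}$, the lag term $\int^{t}_{t-d_{ji}(t)}\dotbm{x}_{j}(\theta)d\theta\to\fbm{0}$, so $\fbm{x}^{d}_{ij}\to\fbm{x}_{ij}$ and therefore $(\fbm{L}(\fbm{x})\otimes\fbm{I}_{n})\fbm{x}\to\fbm{0}$.

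Finally I would turn this into exact agreement via the incidence factorisation $\fbm{L}(\fbm{x})=\fbm{D}\fbm{W}(\fbm{x})\tbm{D}$. The fourth property of $\psi(\cdot)$ keeps $h(|\fbm{x}_{ij}|)$ bounded below by a positive constant on $[0,r]$, so $\fbm{W}(\fbm{x})$ is uniformly positive definite; since preserved connectivity makes every edge difference satisfy $|\fbm{x}_{ij}|<r$, the disagreement part of $\fbm{x}$ is bounded, and pairing it with $(\fbm{L}(\fbm{x})\otimes\fbm{I}_{n})\fbm{x}\to\fbm{0}$ converts the latter into $\tbm{x}(\fbm{L}(\fbm{x})\otimes\fbm{I}_{n})\fbm{x}\to 0$, whence $(\tbm{D}\otimes\fbm{I}_{n})\fbm{x}\to\fbm{0}$. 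Connectivity of the undirected graph makes the kernel of $\tbm{D}$ equal to $\mathrm{span}\{\fbm{1}\}$, so all edge differences vanish and $\fbm{x}_{1}\to\cdots\to\fbm{x}_{N}$, which establishes coordination and completes the theorem.
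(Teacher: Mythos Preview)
Your proposal is correct and follows essentially the same route as the paper: the same Lyapunov function \eqref{equ3}, the same bootstrap on $[0,t_{1}]$ using property~3 of $\psi(\cdot)$, Lemma~\ref{L1} and the delayed-signal proposition to obtain $V(t)\leq V(0)-\tbm{1}\bm{\Phi}\bm{\chi}$, the same invariance conclusion from \eqref{equ5}, and the same two-stage Barbalat argument leading to $(\fbm{L}(\fbm{x})\otimes\fbm{I}_{n})\fbm{x}\to\fbm{0}$. Your final paragraph actually supplies more detail than the paper does in passing from $(\fbm{L}(\fbm{x})\otimes\fbm{I}_{n})\fbm{x}\to\fbm{0}$ to $(\tbm{D}\otimes\fbm{I}_{n})\fbm{x}\to\fbm{0}$ via the quadratic form and the uniform lower bound on $h$, whereas the paper simply asserts this step.
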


Because the single-integrator MAS~\eqref{equ1} in closed loop with the controllers~\eqref{equ2} can be regarded as the double-integrator MAS $\dotbm{x}_{i}=\fbm{v}_{i}$, $\dotbm{v}_{i}=\bm{\tau}_{i}$ with the state-dependent proportional plus damping controls $\bm{\tau}_{i}=-\sum_{j\in\mathcal{N}_{i}(0)}\nabla_{i}\psi(|\fbm{x}^{d}_{ij}|)-\fbm{K}_{i}\dotbm{x}_{i}$, the above analysis readily applies to double-integrator MAS, whose connectivity-preserving coordination the controls $\bm{\tau}_{i}$ can enforce. 

In EL networks with time-varying delays, the agent dynamics are:
\begin{equation}\label{equ6}
\fbm{M}_{i}(\fbm{x}_{i})\ddotbm{x}_{i}+\fbm{C}_{i}(\fbm{x}_{i},\dotbm{x}_{i})\dotbm{x}_{i}+\fbm{g}_{i}(\fbm{x}_{i})=\bm{\tau}_{i}\textrm{,}
\end{equation} 
with $\fbm{x}_{i}$, $\dotbm{x}_{i}$ and $\ddotbm{x}_{i}$ the joint positions, velocities and accelerations; $\fbm{M}_{i}(\fbm{x}_{i})$ and $\fbm{C}_{i}(\fbm{x}_{i},\dotbm{x}_{i})$ the matrices of inertia and of Coriolis and centrifugal effects; and $\fbm{g}_{i}(\fbm{x}_{i})$ and $\bm{\tau}_{i}$ the gravity and control torques. The dynamics~\eqref{equ6} have the following properties:
\begin{enumerate}
\item[1.]
If agent $i$ has only revolute joints, its inertia matrix is symmetric, positive definite and uniformly bounded by $0<\lambda_{i1}\fbm{I}\preceq\fbm{M}_{i}(\fbm{x}_{i})\preceq\lambda_{i2}\fbm{I}<\infty$;
\item[2.]
$\dotbm{M}_{i}(\fbm{x}_{i})-2\fbm{C}_{i}(\fbm{x}_{i},\dotbm{x}_{i})$ is skew-symmetric.
\end{enumerate}

Taking the agent controls:
\begin{equation}\label{equ7}
\bm{\tau}_{i}=-p\sum_{j\in\mathcal{N}_{i}}\nabla_{i}\psi(|\fbm{x}^{d}_{ij}|)-\fbm{K}_{i}\dotbm{x}_{i}+\fbm{g}_{i}(\fbm{x}_{i})\textrm{,}
\end{equation}
the coordination with connectivity preservation of the EL network can be studied using the following Lyapunov function:
\begin{equation}\label{equ8}
V=\frac{1}{2}\sum^{N}_{i=1}\dottbm{x}_{i}\fbm{M}_{i}(\fbm{x}_{i})\dotbm{x}_{i}+\frac{p}{2}\sum^{N}_{i=1}\sum_{j\in\mathcal{N}^{0}_{i}}\psi(|\fbm{x}_{ij}|)\textrm{,}
\end{equation}
with $p$ selected to satisfy:
\begin{equation}\label{equ9}
\begin{aligned}
V(0)=&\frac{1}{2}\sum^{N}_{i=1}\dottbm{x}_{i}(0)\fbm{M}_{i}(\fbm{x}_{i}(0))\dotbm{x}_{i}(0)\\
&+\frac{p}{2}\sum^{N}_{i=1}\sum_{j\in\mathcal{N}^{0}_{i}}\psi(|\fbm{x}_{ij}(0)|)<p\psi(r)\textrm{.}
\end{aligned}
\end{equation} 
\begin{proposition}
There exists a (sufficiently large) $p$ that satisfies condition~\eqref{equ9} for any initial EL network configuration.
\end{proposition}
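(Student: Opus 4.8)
The plan is to exploit the linear dependence of $V(0)$ on the gain $p$. First I would write $V(0)=E_{0}+pS$, separating the kinetic term $E_{0}=\frac{1}{2}\sum^{N}_{i=1}\dottbm{x}_{i}(0)\fbm{M}_{i}(\fbm{x}_{i}(0))\dotbm{x}_{i}(0)$ from the unscaled potential term $S=\frac{1}{2}\sum^{N}_{i=1}\sum_{j\in\mathcal{N}^{0}_{i}}\psi(|\fbm{x}_{ij}(0)|)$. The uniform inertia bound $\fbm{M}_{i}(\fbm{x}_{i})\preceq\lambda_{i2}\fbm{I}$ from the first property of the EL dynamics, together with the finite initial velocities, guarantees that $E_{0}$ is a fixed nonnegative constant determined entirely by the initial configuration and independent of $p$. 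Hence on the left-hand side of~\eqref{equ9} only the term $pS$ grows with $p$, and it must be dominated by $p\psi(r)$ on the right.

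The crucial step is to show $S<\psi(r)$. Assumption~\ref{A1} gives $|\fbm{x}_{ij}(0)|\leq r-\epsilon$ for every initial edge, so the monotonicity in the first property of $\psi(\cdot)$ yields $\psi(|\fbm{x}_{ij}(0)|)\leq\psi(r-\epsilon)$ edge by edge. Since the undirected graph has at most $\frac{N(N-1)}{2}$ distinct edges, and the prefactor $\frac{1}{2}$ in $S$ exactly compensates for the double-counting of each undirected edge by the ordered neighbour sum, I would bound $S\leq\frac{N(N-1)}{2}\psi(r-\epsilon)$. The earlier proposition establishing the second property of $\psi(\cdot)$ then supplies $\frac{N(N-1)}{2}\psi(r-\epsilon)<\psi(r)$, so that $S<\psi(r)$ and in particular $\psi(r)-S>0$.

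With $\psi(r)-S>0$ in hand, condition~\eqref{equ9} is equivalent to $E_{0}<p\,(\psi(r)-S)$, and therefore every choice $p>\frac{E_{0}}{\psi(r)-S}$ satisfies it, which proves existence of a sufficiently large $p$. I do not expect a substantive obstacle: the argument rests entirely on the linearity of $V(0)$ in $p$ and on the strict bound $S<\psi(r)$ inherited from the second property of the potential. The only point needing care is the edge counting---correctly recognizing that the double sum over $j\in\mathcal{N}^{0}_{i}$ traverses each undirected edge twice, so that the $\frac{1}{2}$ prefactor is precisely what makes the clean bound $\frac{N(N-1)}{2}\psi(r-\epsilon)$ valid.
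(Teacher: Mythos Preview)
Your proposal is correct and follows essentially the same route as the paper: split $V(0)$ into the $p$-independent kinetic term and the $p$-scaled potential term, bound the latter by $\frac{N(N-1)}{2}\psi(r-\epsilon)$ via assumption~\ref{A1} and monotonicity, invoke the second property of $\psi(\cdot)$ to get a strictly positive gap $\psi(r)-S$, and then solve the resulting linear inequality for $p$. The paper's threshold $p>\frac{2KE(0)}{2\psi(r)-N(N-1)\psi(r-\epsilon)}$ is exactly your $p>\frac{E_{0}}{\psi(r)-S}$ after substituting the edge-count bound for $S$.
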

\begin{proof}
From assumption~\ref{A1}, $|\fbm{x}_{ij}(0)|\leq r-\epsilon$, $\forall (i,j)\in\mathcal{E}(0)$. The first two properties of $\psi(\cdot)$ imply that $\frac{p}{2}\sum^{N}_{i=1}\sum_{j\in\mathcal{N}_{i}(0)}\psi(|\fbm{x}_{ij}(0)|)\leq \frac{pN(N-1)}{2}\psi(r-\epsilon)$. Then, given a bound $KE(0)$ on the initial kinetic energy of the EL network $\frac{1}{2}\sum^{N}_{i=1}\dottbm{x}_{i}(0)\fbm{M}_{i}(\fbm{x}_{i}(0))\dotbm{x}_{i}(0)\leq KE(0)$, condition~\eqref{equ9} holds if $p\left[\psi(r)-\frac{N(N-1)}{2}\psi(r-\epsilon)\right]>KE(0)$ holds, i.e., if $p$ is selected to satisfy  $p>\frac{2KE(0)}{2\psi(r)-N(N-1)\psi(r-\epsilon)}$.
\end{proof} 

A sufficient condition for connectivity maintenance is that the set $\{(\fbm{x},\dotbm{x})|\ V(t)\leq V(0)<p\psi(r)\}$ be invariant, or that the derivative of the Lyapunov function~\eqref{equ8} be non-positive. This derivative is:
\begin{align*}
\dot{V}=&\frac{1}{2}\sum^{N}_{i=1}\dottbm{x}_{i}\dotbm{M}_{i}(\fbm{x}_{i})\dotbm{x}_{i}+\sum^{N}_{i=1}\dottbm{x}_{i}\fbm{M}_{i}(\fbm{x}_{i})\ddotbm{x}_{i}\\
&+\frac{p}{2}\sum^{N}_{i=1}\sum_{j\in\mathcal{N}^{0}_{i}}\left[\dottbm{x}_{i}\nabla_{i}\psi(|\fbm{x}_{ij}|)+\dottbm{x}_{j}\nabla_{j}\psi(|\fbm{x}_{ij}|)\right]\\
=&\frac{1}{2}\sum^{N}_{i=1}\dottbm{x}_{i}\left[\dotbm{M}_{i}(\fbm{x}_{i})-2\fbm{C}_{i}(\fbm{x}_{i},\dotbm{x}_{i})\right]\dotbm{x}_{i}-\sum^{N}_{i=1}\dottbm{x}_{i}\fbm{K}_{i}\dotbm{x}_{i}\\
&+p\sum^{N}_{i=1}\sum_{j\in\mathcal{N}^{0}_{i}}\dottbm{x}_{i}\left[\nabla_{i}\psi(|\fbm{x}_{ij}|)-\nabla_{i}\psi(|\fbm{x}^{d}_{ij}|)\right]\textrm{.}
\end{align*}
An upper-bound on $\dot{V}$ can be derived based on the skew-symmetry of the EL dynamics:
\begin{align*}
\dot{V}=&p\sum^{N}_{i=1}\sum_{j\in\mathcal{N}^{0}_{i}}\dottbm{x}_{i}\left[\nabla_{i}\psi(|\fbm{x}_{ij}|)-\nabla_{i}\psi(|\fbm{x}^{d}_{ij}|)\right]-\sum^{N}_{i=1}\dottbm{x}_{i}\fbm{K}_{i}\dotbm{x}_{i}\\
\leq&p\sum^{N}_{i=1}\sum_{j\in\mathcal{N}^{0}_{i}}\gamma\boldsymbol{\zeta}\left(\dotbm{x}_{i}\right)^\mathsf{T}\int^{t}_{t-d_{ji}(t)}\boldsymbol{\zeta}\left(\dotbm{x}_{j}(\theta)\right)d\theta\\
&+p\sum^{N}_{i=1}\sum_{j\in\mathcal{N}^{0}_{i}}\eta\bar{x}_{j}\boldsymbol{\zeta}\left(\dotbm{x}_{i}\right)^\mathsf{T}\fbm{1}-\sum^{N}_{i=1}\dottbm{x}_{i}\fbm{K}_{i}\dotbm{x}_{i}\textrm{,}
\end{align*}
and yields an upper-bound on the integral of $\dot{V}$ from $0$ to $t$:
\begin{align*}
V(t)\leq&-\sum^{N}_{i=1}k_{i}\|\dotbm{x}_{i}\|^{2}_{2}+\sum^{N}_{i=1}\sum_{j\in\mathcal{N}^{0}_{i}}\Bigg(\frac{\alpha_{ij}p(\gamma+\eta)}{2}\|\dotbm{x}_{i}\|^{2}_{2}\\
&+\frac{p(\gamma+n\eta)\overline{d}^{2}_{ji}}{2\alpha_{ij}}\|\dotbm{x}_{j}\|^{2}_{2}\Bigg)+V(0)\textrm{.}
\end{align*}

An analysis similar to that of a single-integrator MAS shows that the controllers~\eqref{equ7} maintain the initial connectivity and coordinate the EL network if the damping gains $\fbm{K}_{i}$ satisfy:
\begin{equation}\label{equ10}
k_{i}>\frac{p}{2}\sum_{j\in\mathcal{N}^{0}_{i}}\left(\alpha_{ij}(\gamma+\eta)+\frac{(\gamma+n\eta)\overline{d}^{2}_{ij}}{\alpha_{ji}}\right)\textrm{.}
\end{equation}

\begin{proposition}
The feasibility of the control~\eqref{equ7} derived from the potential $\psi(|\fbm{x}_{ij}|)=\frac{|\fbm{x}_{ij}|^{2}}{r^{2}-|\fbm{x}_{ij}|^{2}+Q}$ depends on the initial state of the EL network.
\end{proposition}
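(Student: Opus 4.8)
The plan is to mirror the single-integrator Proposition that fixes $Q$, $\gamma$ and $\eta$, but to replace its velocity estimate by one suited to the Euler-Lagrange dynamics~\eqref{equ6}, and then to track how that estimate transmits a dependence on the initial state. First I would restrict attention to the invariant set $\{(\fbm{x},\dotbm{x})|\ V(t)\leq V(0)<p\psi(r)\}$, with $V$ the Lyapunov function~\eqref{equ8}. On this set, positivity of $\psi$ on the maintained links still gives $|\fbm{x}_{ij}|<r$. The essential change is the velocity bound: invoking the uniform inertia bound $\lambda_{i1}\fbm{I}\preceq\fbm{M}_{i}(\fbm{x}_{i})$ in $\tfrac12\lambda_{i1}|\dotbm{x}_{i}|^{2}\leq\tfrac12\dottbm{x}_{i}\fbm{M}_{i}(\fbm{x}_{i})\dotbm{x}_{i}\leq V\leq V(0)$ yields $|\dot{x}_{i,k}|\leq\sqrt{2V(0)/\lambda_{i1}}$ for each component $k$, in place of the state-independent $\sqrt{2p\psi(r)}$ that the choice $\fbm{u}_{i}(0)=\fbm{0}$ made available in the single-integrator design.

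With this bound I would repeat the estimates of the single-integrator Proposition. Writing $\fbm{x}^{d}_{ij}=\fbm{x}_{ij}+(\fbm{x}_{j}-\fbm{x}_{jd})$ with $\fbm{x}_{j}-\fbm{x}_{jd}=\int^{t}_{t-d_{ji}(t)}\dotbm{x}_{j}(\theta)d\theta$ gives $|\fbm{x}^{d}_{ij}|\leq r+n\overline{d}_{ji}\sqrt{2V(0)/\lambda_{i1}}$, so choosing $Q$ large enough that $r^{2}-|\fbm{x}^{d}_{ij}|^{2}+Q>0$ keeps both $\nabla_{i}\psi(|\fbm{x}_{ij}|)$ and $\nabla_{i}\psi(|\fbm{x}^{d}_{ij}|)$ well defined on the set. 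The same factorisation $|\fbm{x}_{ij}|^{2}-|\fbm{x}^{d}_{ij}|^{2}=(|\fbm{x}_{ij}|+|\fbm{x}^{d}_{ij}|)(|\fbm{x}_{ij}|-|\fbm{x}^{d}_{ij}|)$ used before then reproduces the third property of $\psi$ with constants $\gamma$ and $\eta$ of exactly the previous form, each occurrence of $\sqrt{2p\psi(r)}$ being replaced by $\sqrt{2V(0)/\lambda_{i1}}$; substituting these into~\eqref{equ10} fixes admissible damping gains $\fbm{K}_{i}$.

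The conclusion of the Proposition then follows by inspection of $V(0)=\tfrac12\sum_{i}\dottbm{x}_{i}(0)\fbm{M}_{i}(\fbm{x}_{i}(0))\dotbm{x}_{i}(0)+\tfrac{p}{2}\sum_{i}\sum_{j\in\mathcal{N}^{0}_{i}}\psi(|\fbm{x}_{ij}(0)|)$, which carries both the initial configuration and the initial kinetic energy $KE(0)$. Since the velocity bound $\sqrt{2V(0)/\lambda_{i1}}$, the smallest admissible $Q$, and hence $\gamma$ and $\eta$ all depend on $V(0)$, every feasibility condition for~\eqref{equ7} is a function of the initial state, which is precisely the assertion.

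The hard part will be establishing the joint solvability of the constraints on $Q$. Property~2 of $\psi$ caps $Q$ from above once $N(N-1)>2r^{2}/(r-\epsilon)^{2}$, while existence of the gradients forces $Q$ from below, and this lower bound grows with $V(0)$ and therefore with $p$; $p$ in turn must be large enough for~\eqref{equ9}, i.e. $p>2KE(0)/[2\psi(r)-N(N-1)\psi(r-\epsilon)]$. Because $\psi(r)=r^{2}/Q$ itself depends on $Q$, these inequalities are coupled, and the delicate point is to verify that the $KE(0)$-driven lower bound on $Q$ stays below the Property~2 upper bound. As this comparison is dictated by $KE(0)$, the initial configuration and the delay bounds $\overline{d}_{ji}$, the control can be rendered feasible only for sufficiently mild initial states, and the two bounds on $Q$ may conflict for energetic initial conditions---making feasibility genuinely initial-state-dependent.
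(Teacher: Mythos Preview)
Your argument is correct and lands on the same mechanism as the paper: the upper bound on $Q$ from Property~2, the lower bound on $Q$ needed for the delayed gradients $\nabla_{i}\psi(|\fbm{x}^{d}_{ij}|)$ to exist, and the lower bound on $p$ from~\eqref{equ9} together create a squeeze that may be unsatisfiable when $KE(0)$ is large. The paper's proof is much terser---it simply cites the three constraints from the earlier propositions and observes that the upper bound on $Q$ induces an upper bound on $p$, to be compared against the $KE(0)$-driven lower bound on $p$---whereas you re-derive the third property of $\psi$ with the EL velocity bound $\sqrt{2V(0)/\lambda_{i1}}$ in place of $\sqrt{2p\psi(r)}$. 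Working with $p\psi(r)$, as the paper does (valid since $V(0)<p\psi(r)$ on the invariant set), has the advantage that the lower bound on $Q$ becomes a function of $p$ alone, so feasibility reduces to a clean comparison of upper and lower bounds on a single parameter $p$; your choice is tighter but, as you correctly note in your last paragraph, couples $Q$, $p$ and $V(0)$ more intricately without changing the conclusion.
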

\begin{proof}
From previous propositions: 1) $Q<\frac{2r^{2}*(r^{2}-(r-\epsilon)^{2})}{N(N-1)(r-\epsilon)^{2}-2r^{2}}$ guarantees $\frac{N(N-1)}{2}\psi(r-\epsilon)<\psi(r)$ when $N(N-1)>\frac{2r^{2}}{(r-\epsilon)^{2}}$; 2) $\nabla_{i}\psi(|\fbm{x}^{d}_{ij}|)$ exist if $Q\geq 2pn^{2}\overline{d}^{2}_{ji}\psi(r)+2nr\overline{d}_{ji}\sqrt{2p\psi(r)}+\Delta$ for some positive $\Delta$; and 3) $p>\frac{2KE(0)}{2\psi(r)-N(N-1)\psi(r-\epsilon)}$ satisfies condition~\eqref{equ9}. The first condition is an upper bound on $Q$ that induces an upper bound on $p$ by the second condition. The third condition is a lower bound on $p$ that depends on the initial state of the EL network. Hence, a feasible $Q$ and $p$ pair may not exist for some initial system states. 
\end{proof}

Though potentially unfeasible, the control~\eqref{equ7} is feasible for any EL network initially at rest. The following theorem summarizes the connectivity-preserving coordination of EL networks based on the potential $\psi(|\fbm{x}_{ij}|)=\frac{|\fbm{x}_{ij}|^{2}}{r^{2}-|\fbm{x}_{ij}|^{2}+Q}$.

\begin{theorem}
Given an EL network~\eqref{equ6} with time-varying communication delays that satisfies assumption~\ref{A1}, the control torques~\eqref{equ7} synchronize the MAS and preserve its connectivity if the parameter $p$ and the damping gains $\fbm{K}_{i}$ can be selected to satisfy conditions~\eqref{equ9} and~\eqref{equ10}.
\end{theorem}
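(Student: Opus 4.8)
The plan is to reuse the single-integrator argument almost verbatim, letting the Euler-Lagrange structure enter only through the two dynamic properties of~\eqref{equ6}. First I would invoke condition~\eqref{equ9}, which together with the first two properties of $\psi(\cdot)$ and assumption~\ref{A1} gives $V(0)<p\psi(r)$ for the Lyapunov function~\eqref{equ8}. Since $V$ is continuous and any edge $(i,j)\in\mathcal{E}(0)$ reaching the communication boundary would by itself contribute $p\psi(r)$ to the potential part of $V$, keeping $V(t)\le V(0)<p\psi(r)$ for all $t$ forbids any edge from ever attaining length $r$, and is therefore sufficient for connectivity preservation. By continuity there is a $t_1>0$ on which every neighbour set stays frozen at $\mathcal{N}^{0}_i$, and I would carry out the stability estimate on $[0,t_1]$ and then extend it.

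The heart of the proof is the evaluation of $\dot V$. Substituting $\fbm{M}_i(\fbm{x}_i)\ddotbm{x}_i=\bm{\tau}_i-\fbm{C}_i(\fbm{x}_i,\dotbm{x}_i)\dotbm{x}_i-\fbm{g}_i(\fbm{x}_i)$ into the kinetic term, the gravity-compensation component of~\eqref{equ7} cancels $\fbm{g}_i(\fbm{x}_i)$, while the skew-symmetry of $\dotbm{M}_i(\fbm{x}_i)-2\fbm{C}_i(\fbm{x}_i,\dotbm{x}_i)$ annihilates the combined inertia-Coriolis quadratic form $\tfrac12\dottbm{x}_i[\dotbm{M}_i-2\fbm{C}_i]\dotbm{x}_i$. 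What remains is structurally identical to the single-integrator case: the dissipation $-\sum_i\dottbm{x}_i\fbm{K}_i\dotbm{x}_i$ plus the delay-induced residual $p\sum_i\sum_{j\in\mathcal{N}^{0}_i}\dottbm{x}_i[\nabla_i\psi(|\fbm{x}_{ij}|)-\nabla_i\psi(|\fbm{x}^{d}_{ij}|)]$. I would then bound the gradient mismatch by the third property of $\psi(\cdot)$, namely $\gamma\boldsymbol{\zeta}(\fbm{x}_j-\fbm{x}_{jd})+\eta\bar x_j\fbm{1}$, use $\boldsymbol{\zeta}(\fbm{x}_j-\fbm{x}_{jd})\le\int^{t}_{t-d_{ji}(t)}|\dotbm{x}_j(\theta)|d\theta$, integrate over $[0,t]$, and apply Lemma~\ref{L1} together with the companion delay-integral proposition to convert the double integrals into $\mathcal{L}_2$ norms. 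This reproduces the bound $V(t)\le V(0)-\tbm{1}\bm{\Phi}\bm{\chi}$ with $\bm{\chi}=[\|\dotbm{x}_1\|^{2}_{2},\cdots,\|\dotbm{x}_N\|^{2}_{2}]^\mathsf{T}$ and $\bm{\Phi}$ exactly as in the single-integrator analysis.

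Condition~\eqref{equ10} is precisely the requirement that all column sums of $\bm{\Phi}$ be positive, giving $V(t)\le V(0)<p\psi(r)$ and hence connectivity; the strict inequality supplies a positive vector $\bm{\lambda}$ with $V(0)-V(t)\ge\tfbm{\lambda}\bm{\chi}$, so $\dotbm{x}_i\in\mathcal{L}_2$. From $V(t)\le V(0)$ and the uniform lower bound $\lambda_{i1}\fbm{I}\preceq\fbm{M}_i(\fbm{x}_i)$ I would conclude $\dotbm{x}_i\in\mathcal{L}_\infty$, and the closed-loop dynamics, using the uniform upper bound on $\fbm{M}_i^{-1}$, give $\ddotbm{x}_i\in\mathcal{L}_\infty$; Barbalat's lemma then forces $\dotbm{x}_i\to\fbm{0}$. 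Differentiating once more to establish uniform continuity of $\sum_{j\in\mathcal{N}^{0}_i}\nabla_i\psi(|\fbm{x}^{d}_{ij}|)$ and applying Barbalat again yields $\sum_{j\in\mathcal{N}^{0}_i}h(|\fbm{x}_{ij}|)(\fbm{x}_i-\fbm{x}_j)\to\fbm{0}$, i.e. $(\fbm{L}(\fbm{x})\otimes\fbm{I}_n)\fbm{x}\to\fbm{0}$; factoring $\fbm{L}(\fbm{x})=\fbm{D}\fbm{W}(\fbm{x})\tbm{D}$ over the connected undirected graph gives $(\tbm{D}\otimes\fbm{I}_n)\fbm{x}\to\fbm{0}$ and hence $\fbm{x}_1\to\cdots\to\fbm{x}_N$.

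The step I expect to be most delicate is the self-referential invariance argument: the third property of $\psi(\cdot)$, and therefore the entire $\dot V$ estimate, is only known to hold on the set $\{(\fbm{x},\dotbm{x})\mid V(t)\le V(0)<p\psi(r)\}$, yet it is used to prove that this same set is never left. I would close this loop exactly as in the single-integrator discussion, establishing $V(t^+)\le V(0)$ under the standing hypothesis $V(t^-)\le V(0)$ and combining it with continuity of $V$ and the strict initial gap $V(0)<p\psi(r)$. A secondary point, absent in the single-integrator setting, is that bounding $\ddotbm{x}_i$ requires the uniform two-sided bounds on $\fbm{M}_i(\fbm{x}_i)$ from the first EL property. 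Feasibility of a compatible $(p,Q,\fbm{K}_i)$ triple is not part of this proof, since the statement is conditional on~\eqref{equ9} and~\eqref{equ10} being simultaneously satisfiable.
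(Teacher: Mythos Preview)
Your proposal is correct and follows essentially the same route as the paper: compute $\dot V$ for the Lyapunov function~\eqref{equ8}, use gravity compensation and the skew-symmetry of $\dotbm{M}_i-2\fbm{C}_i$ to reduce $\dot V$ to the single-integrator residual, then replay the $\mathcal{L}_2$ estimate and Barbalat chain verbatim under condition~\eqref{equ10}. If anything, you are more explicit than the paper, which after deriving the integrated bound simply states that ``an analysis similar to that of a single-integrator MAS'' finishes the proof; your identification of the bootstrap invariance step and the use of the two-sided inertia bounds $\lambda_{i1}\fbm{I}\preceq\fbm{M}_i\preceq\lambda_{i2}\fbm{I}$ to control $\ddotbm{x}_i$ fills in exactly the details the paper leaves implicit.
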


\section{Simulations}\label{Sec: simulations}
This section illustrates the designed control strategies on two simulated MAS-s, a single-integrator MAS and an EL network. The simulated networks have $N=5$ agents with broadcasting radius $r=1$~m, buffer width $\epsilon=0.4$~m and maximum communication delays $\overline{d}_{ij}=0.1$~s, $\forall (i,j)\in\mathcal{E}(0)$. 

The simulated $1$-dimensional single-integrator agents have initial positions: $x_{1}=1$~m, $x_{2}=1.5$~m, $x_{3}=2.1$~m, $x_{4}=2.7$~m and $x_{5}=3.2$~m. The control parameters are selected as: $Q=0.2$ to satisfy the second property of $\psi(\cdot)$; and $K_{1}=K_{5}=30$ and $K_{2}=K_{3}=K_{4}=60$ to satisfy condition~\eqref{equ5}. \fig{fig1} shows that the controls~\eqref{equ2} synchronize the $5$ agents while preserving their initial connections.
\begin{figure}[!hbt]
\centering
\includegraphics[width=\columnwidth , height=6cm]{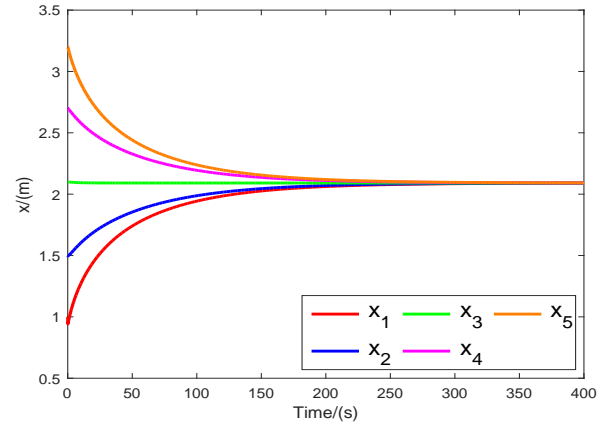}
\caption{Coordination of a simulated single-integrator MAS.}
\label{fig1}
\end{figure}

The simulated EL agents are $2$-DOF robots with link masses $m_{i1}=m_{i2}=0.5$~kg and lengths $l_{i1}=l_{i2}=1$~m. Their initial joint positions are: $\fbm{q}_{1}=\begin{bmatrix}\frac{\pi}{12}, -\frac{5\pi}{12}\end{bmatrix}^\mathsf{T}$, $\fbm{q}_{2}=\begin{bmatrix}\frac{\pi}{6}, -\frac{5\pi}{12}\end{bmatrix}^\mathsf{T}$, $\fbm{q}_{3}=\begin{bmatrix}\frac{5\pi}{24}, -\frac{7\pi}{24}\end{bmatrix}^\mathsf{T}$, $\fbm{q}_{4}=\begin{bmatrix}\frac{\pi}{4}, -\frac{5\pi}{24}\end{bmatrix}^\mathsf{T}$ and $\fbm{q}_{5}=\begin{bmatrix}\frac{9\pi}{24}, -\frac{5\pi}{12}\end{bmatrix}^\mathsf{T}$. For zero initial velocities, the selection $Q=0.2$, $p=0.01$, $\fbm{K}_{1}=360\fbm{I}$, $\fbm{K}_{2}=\fbm{K}_{4}=\fbm{K}_{5}=720\fbm{I}$ and $\fbm{K}_{3}=1080\fbm{I}$ satisfies conditions~\eqref{equ9} and~\eqref{equ10}. \fig{fig2} validates that the controls~\eqref{equ7} coordinate the EL network and maintain its connectivity.
\begin{figure}[!hbt]
\centering
\includegraphics[width=\columnwidth , height=6cm]{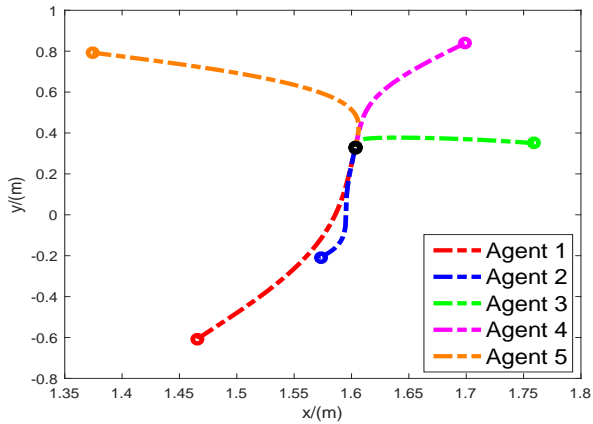}
\caption{Coordination of a simulated EL network.}
\label{fig2}
\end{figure}

\section{Conclusions}\label{Sec: conclusions}
This paper has designed a distributed control strategy for the coordination with local connectivity preservation of single-integrator MAS-s, and of EL networks, in the presence of time-varying delays. The strategy is a proportional plus damping injection controller. The proportional term minimizes the energy of the inter-agent communications as measured by a potential function designed to maintain agents within broadcasting distance of all their initial neighbours. The damping term suppresses the destabilizing disturbances introduced in the proportional control by the delayed position signals. Sufficient damping is injected indirectly, through dynamic filters, in single-integrator MAS-s, and directly in EL networks. Lyapunov stability analysis reveals the relationship between the damping gains and the selected potential, and leads to sufficient conditions on the control gains for connectivity-preserving coordination of the two types of networks. 


\bibliography{bibi}
\end{document}